\newtheorem{theorem}{Theorem}[section]
\newtheorem{lemma}[theorem]{Lemma}
\newtheorem{remark}[theorem]{Remark}
\newcommand{\ds}{\displaystyle}
\begin{document}

\begin{frontmatter}

\title{{\color{blue}An equivalent formulation of Sonine condition}}

\author[1]{Xiangcheng Zheng}
\ead{xzheng@sdu.edu.cn}

\address[1]{School of Mathematics, Shandong University, Jinan 250100, China}

\journal{Applied Mathematics Letters}

\begin{abstract}
Sonine kernel is characterized by the Sonine condition (denoted by SC) and is an important class of kernels in nonlocal differential equations and integral equations. {\color{blue}This work proposes a SC with a more general form (denoted by gSC), which is more convenient than SC to accommodate complex kernels and equations.} A typical kernel is given, and the first-kind Volterra integral equation under gSC is accordingly transformed and then analyzed. {\color{blue}Based on these results, it is finally proved that the gSC is indeed equivalent to the original SC, which indicates that the Sonine kernel may be essentially characterized by the behavior of its convolution with the associated kernel at the starting point}.
\end{abstract}

\begin{keyword}
Sonine kernel, Sonine condition, integral equation, fractional differential equation
\end{keyword}
\end{frontmatter}

\section{Introduction}
The Sonine kernel, which is firstly addressed in \cite{Son}, is an important class of kernels in nonlocal differential equations \cite{Luc,LucFCAA} and integral equations \cite{Car,Sam}. For $b>0$, a kernel $k(t)\in L^1(0,b)$ is called a Sonine kernel if there exists a kernel $K(t)\in L^1(0,b)$ such that the relation
\begin{equation}\label{mh1}
\int_0^tK(t-s)k(s)ds=1
\end{equation}
is valid for almost all $t\in (0,b)$. In this case, the relation (\ref{mh1}) is called the Sonine condition (denoted by SC) and the $K(t)$ is called the associate kernel of $k(t)$.

 Apart from extensive applications, a mathematical advantage of the Sonine kernel lies in that the solutions to the corresponding integral or nonlocal differential equations could be explicitly expressed via the associate kernel. {\color{blue}For instance, the study of the integral equation of the first kind \cite{Gor}
\begin{equation}\label{mh01}
\int_0^tk(t-s)u(s)ds=f(t)
\end{equation}
has a long history, and a main difficulty is to express its solutions for the sake of analysis.}
If $k(t)$ in (\ref{mh01}) is a Sonine kernel, then the solution could be expressed as
\begin{equation}\label{mh02}
u(t)=\frac{d}{dt}\int_0^tK(t-s)f(s)ds. 
\end{equation}
{\color{blue}Based on this solution expression, various investigations have been performed for the integral equation (\ref{mh01}), especially for the weakly singular case $k(t)=t^{-\alpha}$ with $0<\alpha<1$ that leads to the Abel integral equation, see e.g. \cite{Gor} for the comprehensive analysis and applications of Abel integral equations.

However, if the kernel $k(t)$ in the integral equation (\ref{mh01}) has a complex form, then it could be difficult to determine an associated kernel $K(t)$ such that the SC (\ref{mh1}) is satisfied. A typical example is the variable-exponent Abel kernel $k(t)=t^{-\alpha(t)}$ for some smooth exponent function $0<\alpha(t)<1$, which attracts increasing attentions in nonlocal models, cf. a comprehensive review of the variable-exponent models in \cite{SunChaZha}. Due to the variability of $\alpha(t)$, it is not convenient to find an associated kernel of $k(t)=t^{-\alpha(t)}$, that is, one could not determine whether it is a Sonine kernel or not based on the SC (\ref{mh1}). 
}

 {\color{blue}Motivated by this issue, we introduce a Sonine condition with a more general form (denoted by gSC), which is more convenient than the SC (\ref{mh1}) to accommodate complex kernels and equations. Specifically, it is easier to check whether a complex kernel satisfies the gSC or not.}  Although the gSC could introduce difficulties in expressing the solutions explicitly such as (\ref{mh02}), it still helps to convert the original model to a more feasible form to carry out analysis.  We propose a typical example via the variable-exponent Abel kernel $t^{-\alpha(t)}$ for demonstration, and then apply the gSC to analyze the first-kind Volterra integral equation. Based on these results, we finally prove that the gSC is indeed equivalent to the original SC (\ref{mh1}), which indicates that the Sonine kernel may be essentially characterized by the behavior of its convolution with the associated kernel at the starting point.

%Furthermore, we give the generalized Sonine condition in the Laplace domain (\ref{}) to account for the case that the explicit form of the kernel $k(t)$ is available only in the Laplace domain, see e.g. the variable-order fractional kernels defined in the Laplace domain \cite{GarGiu}.

\section{A more general form}
For $k(t)\in L^1(0,b)$, the gSC reads: there exists a $K(t)\in L^1(0,b)$ such that
\begin{equation}\label{mh2}
\begin{array}{c}
\ds \int_0^tK(t-s)k(s)ds=g(t)\text{ for some differentiable function }g(t)\\
\ds\text{ with } g'(t)\in L^1(0,b)\text{ and }g(0)=1.
\end{array}
\end{equation}
Under this relation, $k(t)$ is called the Sonine kernel under the gSC with the associated kernel $K(t)$. Compared with the original SC, the gSC requires $g(t)=1$ only at $t=0$ and gives more flexibility for $g(t)$ on $t\in (0,b]$, which in turn admits more degree of freedom in finding the associated kernel. When $g(t)\equiv 1$, the gSC degenerates to the classical SC (\ref{mh1}). \\
\textbf{Example: The variable-exponent Abel kernel}\\
A typical example is the following variable-exponent Abel kernel 
\begin{equation}\label{kt}
k(t)=t^{-\alpha(t)} \text{ for some }0<\alpha(t)<1\text{ on }[0,b],
\end{equation}
where the variable exponent $\alpha(t)$ describes, e.g. the variation of the memory and hereditary properties \cite{Boc,Lag,LiaSty,SunChaZha,ZenZha}. {\color{blue}Due to the variability of the exponent, it is difficult to find the associated kernel $K(t)$ of $k(t)=t^{-\alpha(t)}$ satisfying the original SC (\ref{mh1}), but it is possible to determine a $K(t)$ that fulfills the gSC (\ref{mh2}) as follows.}
To simplify the derivations, we assume that 
$$\alpha(t)\text{ is differentiable with }|\alpha'(t)|\leq L. $$
%Note that this assumption and the condition $0<\alpha(t)<1$ on $[0,b]$ imply that there exist constants $0<\alpha_*<\alpha^*<1$ such that $\alpha_*\leq \alpha(t)\leq \alpha^*$.

\begin{theorem}
For $k(t)$ defined in (\ref{kt}), there exists an associated kernel
\begin{equation*}%\label{mh9}
K(t)=\frac{t^{\alpha(0)-1}}{\kappa} \text{ with }\kappa=\Gamma(\alpha(0))\Gamma(1-\alpha(0))
\end{equation*}
satisfying the gSC (\ref{mh2}). Here $\Gamma(\cdot)$ denotes the Gamma function.
\end{theorem}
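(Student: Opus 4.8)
The plan is to take the candidate $K(t)=t^{\alpha(0)-1}/\kappa$ and simply \emph{define} $g$ by the left-hand side of (\ref{mh2}), namely
\begin{equation*}
g(t):=\int_0^t K(t-s)k(s)\,ds=\frac{1}{\kappa}\int_0^t (t-s)^{\alpha(0)-1}s^{-\alpha(s)}\,ds,
\end{equation*}
and then verify the three requirements of the gSC: that this convolution is well defined, that $g(0)=1$, and that $g$ is differentiable with $g'\in L^1(0,b)$. Since $0<\alpha(t)<1$ on $[0,b]$ we have $K\in L^1(0,b)$ (as $\alpha(0)-1>-1$) and $k\in L^1(0,b)$, so $g$ is the convolution of two $L^1$ functions and is well defined for a.a.\ $t$.

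The central device is to \emph{freeze the exponent at the origin}. Writing $s^{-\alpha(s)}=s^{-\alpha(0)}+\rho(s)$ with $\rho(s):=s^{-\alpha(s)}-s^{-\alpha(0)}$ splits $g$ into two pieces. The frozen piece is precisely the classical Sonine pairing for the \emph{constant}-exponent Abel kernel: by the scaling substitution $s=t\tau$ it equals $\frac{1}{\kappa}B(\alpha(0),1-\alpha(0))$, and since $B(\alpha(0),1-\alpha(0))=\Gamma(\alpha(0))\Gamma(1-\alpha(0))=\kappa$ this contributes exactly the constant $1$ for every $t>0$. Hence $g(t)=1+R(t)$, where $R(t)=\frac{1}{\Gamma(1-\alpha(0))}\,I^{\alpha(0)}\rho(t)$ is, up to the factor $\Gamma(\alpha(0))/\kappa=1/\Gamma(1-\alpha(0))$, the Riemann--Liouville fractional integral of order $\alpha(0)$ of the remainder $\rho$. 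The problem thus reduces to showing $R(0)=0$ and $R'\in L^1(0,b)$.

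Everything then hinges on the regularity of $\rho$ near $s=0$. I would write $s^{-\alpha(s)}=s^{-\alpha(0)}e^{h(s)}$ with $h(s)=(\alpha(0)-\alpha(s))\ln s$, so that $\rho(s)=s^{-\alpha(0)}\bigl(e^{h(s)}-1\bigr)$. The Lipschitz bound $|\alpha'|\le L$ gives $|\alpha(0)-\alpha(s)|\le Ls$, whence $|h(s)|\le Ls|\ln s|\to0$; combining this with $|e^{x}-1|\le|x|e^{|x|}$ yields $|\rho(s)|\le Cs^{1-\alpha(0)}|\ln s|$, so $\rho$ extends continuously to $s=0$ with $\rho(0)=0$. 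Differentiating $\rho$ and again estimating $h$ together with $h'(s)=-\alpha'(s)\ln s+(\alpha(0)-\alpha(s))/s$ produces the bound $|\rho'(s)|\le Cs^{-\alpha(0)}|\ln s|$, which is integrable on $(0,b)$ because $\alpha(0)<1$. Thus $\rho\in AC[0,b]$ with $\rho(0)=0$ and $\rho'\in L^1(0,b)$.

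With these facts the conclusion is routine: since $\rho(0)=0$, the standard derivative formula for the fractional integral gives $R'=\frac{1}{\Gamma(1-\alpha(0))}I^{\alpha(0)}\rho'$, and because $I^{\alpha(0)}$ maps $L^1(0,b)$ into itself we obtain $R'\in L^1(0,b)$; likewise $R$ is the fractional integral of a bounded function, hence continuous with $R(0)=0$, so $g(0)=1$. I expect the main obstacle to be exactly the two remainder estimates in the third step: taming the apparent $s^{-\alpha(0)-1}$ singularities in $\rho$ and $\rho'$ and showing they cancel down to the $L^1$-admissible order $s^{-\alpha(0)}|\ln s|$, where the hypothesis $|\alpha'|\le L$ and the smallness of $h(s)$ are used crucially.
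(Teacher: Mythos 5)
Your proof is correct, but it follows a genuinely different route from the paper's. The paper rescales the convolution by $z=s/t$, writing $g(t)=\frac{1}{\kappa}\int_0^1(tz)^{\alpha(0)-\alpha(tz)}(1-z)^{\alpha(0)-1}z^{-\alpha(0)}dz$ --- i.e.\ it factors the kernel \emph{multiplicatively} as $s^{-\alpha(s)}=s^{\alpha(0)-\alpha(s)}\cdot s^{-\alpha(0)}$ --- and then differentiates under the integral sign, dominating the $t$-derivative of the integrand by $C t^{-\epsilon}(1-z)^{\alpha(0)-1}z^{-\alpha(0)-\epsilon}$ for small $\epsilon>0$ to conclude $g'\in L^1(0,b)$, while $g(0)=1$ falls out of the Beta integral because $(tz)^{\alpha(0)-\alpha(tz)}\to 1$. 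You instead split \emph{additively}, $s^{-\alpha(s)}=s^{-\alpha(0)}+\rho(s)$, let the frozen part produce the exact constant $1$, and treat the remainder as a Riemann--Liouville integral $R=\frac{1}{\Gamma(1-\alpha(0))}I^{\alpha(0)}\rho$ of an absolutely continuous function with $\rho(0)=0$, so that $R'=\frac{1}{\Gamma(1-\alpha(0))}I^{\alpha(0)}\rho'$ and the $L^1$-boundedness of $I^{\alpha(0)}$ finish the argument. The key estimates on $\rho$ and $\rho'$ are obtained exactly as the paper bounds its integrand, via $|\alpha(0)-\alpha(s)|\le Ls$ and the boundedness of $s|\ln s|$, so the analytic core is shared. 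What your route buys: the identity $g=1+R$ cleanly isolates the deviation from the classical SC, and it actually gives the sharper bound $|g'(t)|\le C(1+|\ln t|)$ instead of the paper's $Ct^{-\epsilon}$; what it costs is reliance on two standard facts about fractional integrals (the differentiation rule and the $L^1\to L^1$ mapping property) that the paper avoids by staying with explicit Beta-type integrals. Two minor points to tidy: your bound on $\rho'$ should read $Cs^{-\alpha(0)}(1+|\ln s|)$, since the term $s^{-\alpha(0)}h'(s)e^{h(s)}$ contributes a non-logarithmic piece (still integrable as $\alpha(0)<1$); and the identity $R'=\frac{1}{\Gamma(1-\alpha(0))}I^{\alpha(0)}\rho'$ is most cleanly justified by the semigroup property $I^{\alpha(0)}\rho=I^{\alpha(0)}I^{1}\rho'=I^{1}\bigl(I^{\alpha(0)}\rho'\bigr)$, which exhibits $R$ as absolutely continuous.
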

\begin{proof} 
It is clear that $K(t)\in L^1(0,b)$. To verify other requirements, direct calculation yields
\begin{align}
\int_0^tK(t-s)k(s)ds&=\frac{1}{\kappa}\int_0^t(t-s)^{\alpha(0)-1}s^{-\alpha(s)}ds\nonumber\\
&\overset{z=s/t}{=}\frac{1}{\kappa}\int_0^1(t-tz)^{\alpha(0)-1}(tz)^{-\alpha(tz)}tdz\label{mh7}\\
&=\frac{1}{\kappa}\int_0^1(tz)^{\alpha(0)-\alpha(tz)}(1-z)^{\alpha(0)-1}z^{-\alpha(0)}dz=:g(t).\nonumber
 \end{align}
Then we intend to prove $g'(t)\in L^1(0,b)$. By the assumptions on $\alpha(t)$ and $x|\ln x|\leq C$ for $x\in [0,b]$ for some constant $C$, we bound
$$(tz)^{\alpha(0)-\alpha(tz)}=e^{(\alpha(0)-\alpha(tz))\ln(tz)}\leq e^{Ltz|\ln(tz)|}\leq e^{LC}. $$ 
We then apply this and the property of $\alpha(t)$ to bound
 \begin{align*}
 \Big|\frac{d}{dt}(tz)^{\alpha(0)-\alpha(tz)}\Big|&=\Big|(tz)^{\alpha(0)-\alpha(tz)}z\Big(-\alpha'(tz)\ln(tz)+\frac{\alpha(0)-\alpha(tz)}{tz}\Big) \Big|\\
 &\leq e^{LC}\big(L|\ln(tz)|+L\big)=e^{LC}L\big(|\ln(tz)|+1\big).
 \end{align*}
 For any fixed $t>0$, the derivative of the integand of $g(t)$ with respect to $t$ could thus be bounded as
 $$B(t,z):=e^{LC}L\big(|\ln(tz)|+1\big)(1-z)^{\alpha(0)-1}z^{-\alpha(0)}. $$
 Since
 \begin{align*}
 \int_0^1B(t,z)dz&\leq e^{LC}L \int_0^1 \big(|\ln(tz)|+1\big)(1-z)^{\alpha(0)-1}z^{-\alpha(0)}dz\\
 &= e^{LC}Lt^{-\epsilon} \int_0^1 (tz)^{\epsilon}\big(|\ln(tz)|+1\big)(1-z)^{\alpha(0)-1}z^{-\alpha(0)-\epsilon}dz\\
 &\leq Qe^{LC}Lt^{-\epsilon}\int_0^1 (1-z)^{\alpha(0)-1}z^{-\alpha(0)-\epsilon}dz\\
 &=Qe^{LC}Lt^{-\epsilon}\frac{\Gamma(\alpha(0))\Gamma(1-\alpha(0)-\epsilon)}{\Gamma(1-\epsilon)}
 \end{align*}
 for $0<\epsilon<1-\alpha(0)$ for any fixed $t>0$ where $Q$ represents the bound of $x^\epsilon(|\ln x|+1)$ for $x\in [0,b]$, we differentiate $g$ to obtain
\begin{align*}
|g'(t)|&=\Big|\frac{1}{\kappa}\int_0^1\frac{d}{dt}(tz)^{\alpha(0)-\alpha(tz)}(1-z)^{\alpha(0)-1}z^{-\alpha(0)}dz\Big|\\
&\leq \frac{1}{\kappa} \int_0^1B(t,z)dz\leq \frac{1}{\kappa}Qe^{LC}Lt^{-\epsilon}\frac{\Gamma(\alpha(0))\Gamma(1-\alpha(0)-\epsilon)}{\Gamma(1-\epsilon)},
\end{align*}
which implies $g'(t)\in L^1(0,b)$. 
%
%Thus,  the right-hand side $g(t)$ of (\ref{mh7}) could be bounded as
%$$g(t)\leq \frac{1}{\kappa}\int_0^1e^{LC}(1-z)^{\alpha(0)-1}z^{-\alpha(0)}dz=\frac{1}{\kappa}e^{LC}\Gamma(1-\alpha(0))\Gamma(\alpha(0))=e^{LC}. $$

Furthermore, we apply 
$$\lim_{x\rightarrow 0^+}x^{\alpha(0)-\alpha(x)}=\lim_{x\rightarrow 0^+}e^{(\alpha(0)-\alpha(x))\ln x}=e^0=1 $$
and take $t=0$ in the right-hand side of (\ref{mh7}) to obtain
$$g(0)=\frac{1}{\kappa}\int_0^1(1-z)^{\alpha(0)-1}z^{-\alpha(0)}dz=1.$$
Thus we complete the proof.
\end{proof}

\section{Equivalence via integral equation}
Under the setting of the gSC, we reformulate the integral equation (\ref{mh01}) into a more feasible form for the sake of analysis. Throughout this section, we assume that $f\in C^1[0,b]$ for simplicity, though more careful treatments could be applied to relax this requirement.
\begin{theorem}\label{thm}
The first-kind Volterra integral equation (\ref{mh01}) with a kernel $k$ satisfying the gSC could be formally reformulated to a second-kind 
Volterra integral equation
\begin{equation}\label{mh10}
u(t)+\int_0^t g'(t-y)u(y)dy=\frac{d}{dt}\int_0^tK(t-s)f(s)ds.
\end{equation}
\end{theorem}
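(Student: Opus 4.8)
The plan is to apply the associated kernel $K$ as a Volterra convolution to both sides of (\ref{mh01}), exploit the associativity of convolution so that the gSC (\ref{mh2}) collapses the composite kernel $K*k$ into $g$, and then differentiate once to expose the second-kind structure. Writing the Volterra convolution as $(a*b)(t)=\int_0^t a(t-s)b(s)\,ds$, the original equation (\ref{mh01}) reads $k*u=f$.

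First I would convolve both sides with $K$ to get $K*(k*u)=K*f$. Since $k,K\in L^1(0,b)$ and $u$ is assumed to lie in a space for which these convolutions make sense, Fubini's theorem justifies the associativity $K*(k*u)=(K*k)*u$, so the left-hand side becomes $(K*k)*u$. Invoking the gSC (\ref{mh2}), i.e. $K*k=g$, this reduces to
$$\int_0^t g(t-s)u(s)\,ds=\int_0^t K(t-s)f(s)\,ds.$$
Next I would differentiate this identity in $t$. The right-hand side reproduces exactly the right-hand side of (\ref{mh10}). For the left-hand side I would apply the Leibniz rule to the convolution $g*u$: since $g$ is differentiable with $g'\in L^1(0,b)$, it is absolutely continuous with $g(t)=g(0)+\int_0^t g'(\tau)\,d\tau$, whence
$$\frac{d}{dt}\int_0^t g(t-s)u(s)\,ds=g(0)\,u(t)+\int_0^t g'(t-y)u(y)\,dy.$$
Using the normalization $g(0)=1$ supplied by (\ref{mh2}), the boundary contribution is precisely $u(t)$, and (\ref{mh10}) follows.

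The main obstacle I expect is not the convolution algebra but the rigorous justification of this differentiation, and in particular the emergence of the free term $u(t)$. Its coefficient is $g(0)$, so the whole reformulation pivots on the gSC normalization at the starting point together with the absolute continuity inherited from $g'\in L^1(0,b)$; this is exactly why Theorem \ref{thm} claims only a \emph{formal} reduction, as the pointwise differentiability of $\tfrac{d}{dt}(K*f)$ and of $g*u$ need only hold almost everywhere, and interchanging $\tfrac{d}{dt}$ with the integral requires a dominated-convergence-type control of $g'$ of the kind already established in the proof of the preceding theorem. As a consistency check, setting $g\equiv 1$ annihilates the integral term $\int_0^t g'(t-y)u(y)\,dy$ and returns the explicit first-kind solution formula (\ref{mh02}), confirming that the classical SC is recovered as the special case of the gSC.
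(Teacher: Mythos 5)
Your proposal is correct and follows essentially the same route as the paper: convolving (\ref{mh01}) with $K$, using Fubini/associativity to collapse $K*k$ into $g$ via the gSC, and then differentiating with $g(0)=1$ to produce the free term $u(t)$ is exactly the paper's argument, merely written in convolution notation rather than as an explicit interchange of the double integral.
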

\begin{proof}
We replace $t$ and $s$ in (\ref{mh01}) by $s$ and $y$, respectively, multiple $K(t-s)$ on both sides of the resulting equation, and then integrate from $0$ to $t$ to obtain
\begin{equation}\label{mh3}
\int_0^tK(t-s)\int_0^sk(s-y)u(y)dyds=\int_0^tK(t-s)f(s)ds. 
\end{equation}
We interchange the double integral to get
\begin{equation*}%\label{mh4}
\int_0^t\int_y^t K(t-s)k(s-y)dsu(y)dy=\int_0^tK(t-s)f(s)ds. 
\end{equation*}
We apply the variable substitution $z=s-y$ for the inner integral to get
\begin{equation*}%\label{mh5}
\int_0^t\int_0^{t-y} K(t-y-z)k(z)dzu(y)dy=\int_0^tK(t-s)f(s)ds. 
\end{equation*}
Applying the gSC (\ref{mh2}) leads to
\begin{equation}\label{mh6}
\int_0^t g(t-y)u(y)dy=\int_0^tK(t-s)f(s)ds. 
\end{equation}
We finally differentiate this equation on both sides and use $g(0)=1$ to obtain (\ref{mh10}) and thus complete the proof.
\end{proof}
Based on this theorem, one could analyze the transformed equation (\ref{mh10}) instead of the original equation (\ref{mh01}), which significantly reduces the difficulties. However, in order to bring the analysis results on the transformed equation (\ref{mh10}) back to the original equation (\ref{mh01}), the following auxiliary result is needed.
\begin{lemma}\label{thm2}
An $L^1$ solution of the transformed equation (\ref{mh10}) is also a solution to the original equation (\ref{mh01}).
\end{lemma}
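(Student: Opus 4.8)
The plan is to reverse, step by step, the chain of manipulations that produced \eqref{mh10} from \eqref{mh01} in the proof of Theorem \ref{thm}, and then to invoke a convolution-cancellation argument to remove the associated kernel. Throughout I write $(\phi*\psi)(t)=\int_0^t\phi(t-s)\psi(s)\,ds$ for the Laplace convolution on $(0,b)$, which is commutative and associative on $L^1(0,b)$, and I set $h:=k*u-f$, noting $h\in L^1(0,b)$ since $u,k\in L^1(0,b)$ and $f\in C^1\subset L^1$. The goal $h\equiv 0$ is exactly the original equation \eqref{mh01}.

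First I would integrate \eqref{mh10} from $0$ to $t$. The right-hand side integrates to $\int_0^tK(t-s)f(s)\,ds$ (the boundary term vanishes), while the left-hand side is the exact derivative of $\int_0^t g(t-y)u(y)\,dy$ precisely because $g(0)=1$; hence I recover \eqref{mh6}, i.e.\ $g*u=K*f$. Next I would substitute the gSC \eqref{mh2} in the form $g=K*k$, so that $g*u=(K*k)*u=K*(k*u)$, and subtract to obtain the convolution identity
\begin{equation*}
K*h=K*(k*u)-K*f=0 \quad\text{on }(0,b).
\end{equation*}
Reading this back through Fubini and the substitution $z=s-y$ just reproduces \eqref{mh3}, which confirms that the manipulation is the faithful reverse of Theorem \ref{thm}. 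Convolving once more with $k$ and using commutativity and associativity together with $g=K*k$ gives
\begin{equation*}
g*h=(K*k)*h=k*(K*h)=k*0=0 \quad\text{on }(0,b).
\end{equation*}

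The crux, and the step I expect to be the main obstacle, is to pass from $g*h=0$ to $h=0$; the remaining steps are routine reversal of the forward computation. The clean way is Titchmarsh's convolution theorem: if $g*h=0$ on $(0,b)$, then there are $\lambda,\mu\ge 0$ with $\lambda+\mu\ge b$ such that $g$ vanishes a.e.\ on $(0,\lambda)$ and $h$ vanishes a.e.\ on $(0,\mu)$. Since $g$ is continuous with $g(0)=1\ne 0$, it cannot vanish on any $(0,\lambda)$ with $\lambda>0$, forcing $\lambda=0$ and hence $\mu\ge b$, i.e.\ $h=0$ a.e.\ on $(0,b)$. Equivalently, one may differentiate $g*h=0$ and use $g(0)=1$ to arrive at the homogeneous second-kind Volterra equation $h(t)+\int_0^t g'(t-s)h(s)\,ds=0$; since $g'\in L^1(0,b)$, the associated Volterra convolution operator is quasinilpotent on $L^1(0,b)$ (the Laplace-convolution algebra $L^1(0,b)$ being radical, again a consequence of Titchmarsh's theorem), so the only $L^1$ solution is $h\equiv 0$. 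The technical points to verify carefully are the applicability of Fubini in the interchange of integrals and the absolute continuity of $g$ needed to justify both the integration of \eqref{mh10} and the differentiation of $g*h$; the latter follows from $g$ being everywhere differentiable with $g'\in L^1(0,b)$. Either route yields $k*u=f$, which is \eqref{mh01}, completing the argument.
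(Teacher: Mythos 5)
Your proposal is correct and follows essentially the same route as the paper: recover \eqref{mh6} from \eqref{mh10} by undoing the differentiation (the paper does this via the constant $c_0=0$), reverse the convolution manipulations to reach $K*(k*u-f)=0$, convolve with $k$ (the paper phrases this as applying Theorem \ref{thm} with the roles of $K$ and $k$ swapped) to get the homogeneous second-kind equation $h+g'*h=0$, and conclude $h\equiv 0$. The only cosmetic difference is the final step, where the paper invokes a Gronwall inequality for integrable kernels rather than Titchmarsh's convolution theorem or the radical property of the convolution algebra; both are valid.
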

\begin{proof}
 Let $u(t)\in L^1(0,b)$ be a solution to the transformed equation. Then we rewrite (\ref{mh10}) as
$$\frac{d}{dt}\Big(\int_0^t \big(g(t-y)u(y)-K(t-y)f(y)\big)dy\Big)=0, $$
which implies
\begin{equation}\label{mh11}
\int_0^t \big(g(t-y)u(y)-K(t-y)f(y)\big)dy=c_0 
\end{equation}
for some constant $c_0$.
As $u(t),\,K(t)\in L^1(0,b)$, $f(t)$ is bounded by assumption and $g(t)$ is bounded as follows
$$|g(t)|=\Big|\int_0^tg'(s)ds+g(0)\Big|\leq \|g'\|_{L^1(0,b)}+1, $$
we pass the limit $t\rightarrow 0^+$ in (\ref{mh11}) to get $c_0=0$, which, together with the same derivations as (\ref{mh3})--(\ref{mh6}), leads to
\begin{equation}\label{mh12}
\int_0^tK(t-s)\Big(\int_0^sk(s-y)u(y)dy-f(s)\Big)ds=0. 
\end{equation}
Let 
$$p(t):=\int_0^sk(s-y)u(y)dy-f(s) $$
such that (\ref{mh12}) could be written as
$$\int_0^tK(t-s)p(s)ds=0,  $$
which is indeed a first-kind Volterra integral equation. In particular, as $K(t)$ is also a Sonine kernel under the gSC with the associated kernel $k(t)$, we apply Theorem \ref{thm} on the above equation to get
$$
p(t)+\int_0^t g'(t-y)p(y)dy=0,
$$
which implies 
$$
|p(t)|\leq \int_0^t |g'(t-y)||p(y)|dy.
$$
Then we apply the Gronwall inequality, see e.g. \cite[Lemma 2.7]{Lin} to obtain $p(t)\equiv 0$, i.e. $u$ solves the original equation (\ref{mh01}).
\end{proof}
With the above analysis, one could analyze the integral equation (\ref{mh01}) under the gSC. Specifically, by classical results on the second-kind Volterra integral equations, see e.g. \cite[Theorem 2.3.5]{Gri}, one could obtain the existence and uniqueness of the solutions $u(t)\in L^1(0,b)$ for the transformed equation (\ref{mh10}) based on the properties of $g$ in the gSC and appropriate conditions on $f(t)$, e.g. $f(t)\in C^1[0,b]$ for simplicity. Then we apply Lemma {\color{blue}\ref{thm2}} to conclude that the integral equation (\ref{mh01}) admits an $L^1$ solution. The uniqueness of the solutions to the integral equation (\ref{mh01}) follows from that of the transformed equation (\ref{mh10}). We summarize this well-posedness result in the following theorem.
\begin{theorem}\label{thm:yqbz}
The integral equation (\ref{mh01}) with a kernel satisfying the gSC and $f\in C^1[0,b]$ admits a unique $L^1$ solution.
\end{theorem}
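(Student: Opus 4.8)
The plan is to assemble the well-posedness of (\ref{mh01}) from three ingredients already in place: the reformulation of Theorem \ref{thm}, the solution-transfer of Lemma \ref{thm2}, and the classical $L^1$ theory for second-kind Volterra integral equations. The point of departure is that, by Theorem \ref{thm}, the first-kind equation (\ref{mh01}) is (formally) equivalent to the second-kind Volterra equation (\ref{mh10}),
$$u(t)+\int_0^t g'(t-y)u(y)dy=F(t),\qquad F(t):=\frac{d}{dt}\int_0^tK(t-s)f(s)ds,$$
whose integral kernel $g'$ already lies in $L^1(0,b)$ by the gSC (\ref{mh2}).

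First I would verify that the right-hand side $F(t)$ is a genuine, well-defined element of $L^1(0,b)$; this is precisely where the regularity $f\in C^1[0,b]$ is consumed. Rewriting the convolution as $\int_0^tK(t-s)f(s)ds=\int_0^tK(\tau)f(t-\tau)d\tau$ and differentiating yields the key identity
$$F(t)=f(0)K(t)+\int_0^tK(t-s)f'(s)ds.$$
Since $K\in L^1(0,b)$ and $f(0)$ is a constant, the first term is in $L^1$; since $f'$ is bounded, Young's convolution inequality gives $\|K*f'\|_{L^1}\le \|K\|_{L^1}\|f'\|_{L^1}<\infty$, so the second term is in $L^1$ as well. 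Hence $F\in L^1(0,b)$.

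With $F\in L^1(0,b)$ and kernel $g'\in L^1(0,b)$, equation (\ref{mh10}) is a standard second-kind Volterra integral equation, and the classical theory, see \cite[Theorem 2.3.5]{Gri}, built on the existence of an $L^1$ resolvent kernel associated with $g'$, furnishes a \emph{unique} solution $u\in L^1(0,b)$. Existence for (\ref{mh01}) then follows immediately: by Lemma \ref{thm2}, this $u$ also solves the original equation (\ref{mh01}).

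For uniqueness I would argue in the converse direction. Any $L^1$ solution of (\ref{mh01}) satisfies (\ref{mh10}) through exactly the chain of manipulations (\ref{mh3})--(\ref{mh6}) in the proof of Theorem \ref{thm} followed by differentiation; thus every $L^1$ solution of (\ref{mh01}) is in particular an $L^1$ solution of (\ref{mh10}). Since the latter is unique, so is the former, which completes the argument. The main obstacle I anticipate is not the invocation of the classical Volterra theory but the rigorous treatment of the right-hand side $F$: the differentiation of the convolution and the verification that it produces an honest $L^1$ function require care, because $K$ is only $L^1$ and may be singular at the origin (as for the Abel-type kernel), so it is exactly at this integration-by-parts/convolution-differentiation step that the $C^1$ regularity of $f$ is genuinely needed.
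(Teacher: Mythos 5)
Your proposal is correct and follows essentially the same route as the paper: apply the classical second-kind Volterra theory \cite[Theorem 2.3.5]{Gri} to the transformed equation (\ref{mh10}), use Lemma \ref{thm2} to transfer existence back to (\ref{mh01}), and obtain uniqueness for (\ref{mh01}) from uniqueness for (\ref{mh10}) via the derivation in Theorem \ref{thm}. The only difference is that you explicitly verify that the right-hand side $F(t)=f(0)K(t)+\int_0^tK(t-s)f'(s)\,ds$ lies in $L^1(0,b)$, a detail the paper subsumes under ``appropriate conditions on $f$''; this is a welcome addition rather than a deviation.
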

\begin{remark}
Again, the regularity on $f$ in this theorem is strong for the simplicity of the analysis, which could be relaxed by more careful estimates. 
\end{remark}
{\color{blue}Finally, we apply the well-posedness of the integral equation (\ref{mh01}) to show the equivalence of the SC (\ref{mh1}) and the gSC (\ref{mh2}). It is clear that the original SC (\ref{mh1}) implies the gSC (\ref{mh2}). To derive (\ref{mh1}) from (\ref{mh2}), we suppose the gSC (\ref{mh2}) holds for some kernel $k^*(t)\in L^1(0,b)$. By the well-posedness of the integral equation (\ref{mh01}) with the kernel $k(t)=k^*(t)$ and $f(t)\equiv 1$, as stated in Theorem \ref{thm:yqbz}, there exists a unique solution $u(t)\in L^1(0,b)$ to (\ref{mh01}), which means that the kernel $k^*(t)$ is exactly the Sonine kernel with the associated kernel $u(t)$, i.e. the kernel $k^*(t)$ satisfies the original SC (\ref{mh1}). We summarize this result in the following theorem.
\begin{theorem}
A kernel $k(t)\in L^1(0,b)$ satisfies the SC (\ref{mh1}) if and only if it satisfies the gSC (\ref{mh2}).
\end{theorem}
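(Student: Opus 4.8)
The plan is to establish the two implications separately, with the forward direction being immediate and the reverse direction resting on the well-posedness result already proved.

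First I would dispatch the easy direction. If $k$ satisfies the classical SC (\ref{mh1}), then setting $g(t)\equiv 1$ makes $g$ differentiable with $g'\equiv 0\in L^1(0,b)$ and $g(0)=1$, so the very same associate kernel $K$ witnesses the gSC (\ref{mh2}). Thus SC $\Rightarrow$ gSC requires no work, a fact already noted in the text preceding the statement.

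The substance lies in the converse. Suppose $k^*\in L^1(0,b)$ satisfies the gSC. The idea is to \emph{produce} a classical associate for $k^*$ by solving a specific first-kind Volterra equation. Concretely, I would apply Theorem \ref{thm:yqbz} to equation (\ref{mh01}) with kernel $k=k^*$ and the constant right-hand side $f(t)\equiv 1$. Since $f\equiv 1\in C^1[0,b]$ and $k^*$ satisfies the gSC by hypothesis, the theorem furnishes a unique solution $u\in L^1(0,b)$ to
$$\int_0^t k^*(t-s)u(s)\,ds=1.$$
By commutativity of the convolution, this is the same as $\int_0^t u(t-s)k^*(s)\,ds=1$ for almost all $t\in(0,b)$, which is precisely the statement that $u$ is a classical associate kernel of $k^*$ in the sense of (\ref{mh1}). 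Hence $k^*$ satisfies the SC, completing gSC $\Rightarrow$ SC.

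The crux---and the only place where anything nontrivial happens---is the invocation of Theorem \ref{thm:yqbz}, whose proof bundles the reformulation of Theorem \ref{thm} (converting the first-kind equation into the second-kind equation (\ref{mh10})) together with Lemma \ref{thm2} and a Gronwall argument. I expect no additional obstacle at this final stage: the remaining checks are purely that $f\equiv 1$ meets the regularity hypothesis (immediate) and that the produced $u$ lies in $L^1$ (guaranteed by the theorem), so it qualifies as an admissible associate kernel. The conceptual point worth emphasizing is that the well-posedness of the first-kind equation with constant data is \emph{exactly} the existence of a Sonine associate, which is why the gSC, despite its extra flexibility in $g$ on $(0,b]$, carries no more generality than the SC.
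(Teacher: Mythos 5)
Your proposal is correct and follows essentially the same route as the paper: the forward direction via $g\equiv 1$, and the converse by applying Theorem \ref{thm:yqbz} to (\ref{mh01}) with $k=k^*$ and $f\equiv 1$ so that the resulting $L^1$ solution $u$ serves (after commuting the convolution) as the classical associate kernel. No substantive difference from the paper's argument.
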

From this equivalence theorem, we may draw a conclusion intuitively that the Sonine kernel may be essentially characterized by the behavior of its convolution with the associated kernel at the starting point, which motivates us to focus the attention around the starting point in the future research.
}

\section*{Acknowledgement}
This work was partially supported by the Taishan Scholars Program of Shandong Province (No. tsqn202306083), the National Natural Science Foundation of China (No. 12301555), and the National Key R\&D Program of China (No. 2023YFA1008903).

\section*{Disclosure statement}
The author declares that they have no conflict of interest.

%\section*{Funding}

%\section*{Notes on contributor(s)}

%An unnumbered section, e.g.\ \verb"\section*{Notes on contributors}", may be included \emph{in the non-anonymous version} if required. A photograph may be added if requested.

%\section*{References}

\end{document}